\newtheorem{thm}{Theorem}[section]
\newtheorem{lmm}[thm]{Lemma}
\newtheorem{crl}[thm]{Corollary}
\theoremstyle{definition}\newtheorem{rmk}[thm]{Remark}
\theoremstyle{definition}\newtheorem{exa}[thm]{Example}
\theoremstyle{definition}
\def\Br{{\mathrm{Br}}}
\def\End{{\mathrm{End}}}
\def\Hom{{\mathrm{Hom}}}
\def\Ind{\mathrm{Ind}}
\def\Inf{\mathrm{Inf}}
\def\Res{\mathrm{Res}}
\def\Ff{{\mathcal F}}
\def\CF{{\mathcal F}}
\def\Sc{\mathrm{Sc}}
\def\tenk{\otimes_{k}}
\begin{document}

\title[On the Brauer indecomposability of Scott modules]
{On the Brauer indecomposability 
\\
of Scott modules }
\author[R. Kessar, S. Koshitani, M. Linckelmann]
{Radha Kessar, Shigeo Koshitani, Markus Linckelmann} 
\address{City University London, Department of Mathematics,
         Northampton Square, London, EC1V 0HB, United Kingdom
        }
\email{radha.kessar.1@city.ac.uk}        
\address{ Department of Mathematics, Graduate School of Science,
          Chiba University, 1-33 Yayoi-cho, Inage-ku, Chiba,
          263-8522, Japan
        }
\email{koshitan@math.s.chiba-u.ac.jp}
\address{City University London, Department of Mathematics,
         Northampton Square, London, EC1V 0HB, United Kingdom
        }
\email{markus.linckelmann.1@city.ac.uk}

\begin{abstract}
Let $k$ be an algebraically closed field of prime characteristic $p$,
and let $P$ be a $p$-subgroup of a finite group $G$. We give 
sufficient conditions for the $kG$-Scott module $\Sc(G,P)$ 
with vertex $P$ to remain indcomposable under the Brauer construction 
with respect to any subgroup of $P$.
This generalizes similar results for the case where $P$ is abelian.
The background motivation for this note is the fact that
the Brauer indecomposability of a $p$-permutation bimodule is a key
step towards showing that the module under consideration induces
a stable equivalence of Morita type, which then may possibly
be lifted to a derived equivalence. 
\end{abstract}

\maketitle

\section{Introduction} \label{intro}

Throughout this paper we denote by 
$k$ an algebraically closed field of prime characteristic $p$.
The Brauer construction with respect to a $p$-subgroup $P$ of a finite 
group $G$ sends a $p$-permutation $kG$-module 
$M$ functorially to a $p$-permutation $kN_G(P)$-module $M(P)$; 
see e.g. \cite[p.402]{Br2} or \cite[pp.91 and 219]{The}.
Following the terminology introduced in \cite{KeKuMi}, the module $M$ is 
called {\it Brauer indecomposable} if the $kC_G(Q)$-module 
$\Res^{N_G(Q)}_{C_G(Q)}(M(Q))$ is indecomposable or zero for any 
$p$-subgroup $Q$ of $G$. As mentioned in \cite{KeKuMi}, the Brauer 
indecomposability of $p$-permutation modules is relevant for the gluing 
technique used for proving categorical equivalences between $p$-blocks 
of finite groups as in Brou\'e's abelian defect group conjecture, 
see \cite{KoLi}, \cite{KeKuMi} and \cite{Tu}. For any subgroup $H$ of 
$G$ there is up to isomorphism a unique indecomposable direct summand 
of the permutation $kG$-module $kG/H$ which has a Sylow $p$-subgroup 
of $H$ as a vertex and the trivial $kG$-module as a quotient. 
This is called the {\it Scott $kG$-module with respect to $H$},
denoted by $\Sc(G,H)$. If $P$ is a Sylow $p$-subgroup
of $H$, then $\Sc(G,H)=$ $\Sc(G,P)$ is, up to isomorphism, the unique 
indecomposable $kG$-module with $P$ as a vertex, the trivial 
$kP$-module as a source, and the trivial $kG$-module as a quotient.
See \cite[Chap.4, \S 8]{NaTsu} and \cite{Br2} for more details
on Scott modules. We first prove a criterion for the Brauer
indecomposability of Scott modules in terms of the indecomposability
of Scott modules of certain local subgroups.

\begin{thm} \label{ScottBrauerindec}
Let $P$ be a $p$-subgroup of a finite group $G$. 
Let $\Ff = \Ff_P(G)$ be the fusion system of $G$ on $P$.
Suppose that $\Ff$ is saturated, and that $\Ff=$ $\Ff_P(N_G(P))$. 
Then $\Sc(G,P)$ is Brauer indecomposable if and only if 
$$\Res^{N_P(Q)C_G(Q)}_{C_G(Q)}(\Sc(N_P(Q)C_G(Q), N_P(Q))$$
is indecomposable for any subgroup $Q$ of $P$.
\end{thm}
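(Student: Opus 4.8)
For each $Q\leq P$ I would determine the $kN_G(Q)$-module $\Sc(G,P)(Q)$ from the Brauer construction of the permutation module $kG/P$, and then compare $\Res^{N_G(Q)}_{C_G(Q)}(\Sc(G,P)(Q))$ with $\Res^{N_P(Q)C_G(Q)}_{C_G(Q)}(\Sc(N_P(Q)C_G(Q),N_P(Q)))$, showing the former is indecomposable exactly when the latter is; by the reduction below it suffices to do this for all $Q\leq P$.

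\textbf{Reductions.} The module $\Sc(G,P)$ is a $p$-permutation $kG$-module with vertex $P$ and trivial source, so $\Sc(G,P)(Q)=0$ unless $Q$ is $G$-conjugate to a subgroup of $P$, and conjugation by $g\in G$ identifies $\Res^{N_G(Q)}_{C_G(Q)}(\Sc(G,P)(Q))$ with $\Res^{N_G({}^gQ)}_{C_G({}^gQ)}(\Sc(G,P)({}^gQ))$; hence $\Sc(G,P)$ is Brauer indecomposable if and only if $\Res^{N_G(Q)}_{C_G(Q)}(\Sc(G,P)(Q))$ is indecomposable for every $Q\leq P$. The hypotheses yield a crucial uniformity: since $\Ff=\Ff_P(N_G(P))$ and conjugation inside $N_G(P)$ fixes $P$ setwise, every $G$-conjugate $Q'\leq P$ of $Q$ has $N_P(Q')$ conjugate to $N_P(Q)$, so $|N_P(Q)|=|N_P(Q')|$; thus every $Q\leq P$ is fully $\Ff$-normalized, and by saturation $N_P(Q)\in\mathrm{Syl}_p(N_G(Q))$, $C_P(Q)\in\mathrm{Syl}_p(C_G(Q))$ and $[N_G(Q):N_P(Q)C_G(Q)]$ is prime to $p$.

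\textbf{The Brauer construction and one inclusion.} Put $L=N_P(Q)C_G(Q)$. As $\Sc(G,P)\mid kG/P=\Ind^G_P(k)$, the module $\Sc(G,P)(Q)$ is a direct summand of $(\Ind^G_P k)(Q)=k[(G/P)^Q]$, and decomposing the $N_G(Q)$-set $(G/P)^Q$ into orbits gives
$$(\Ind^G_P k)(Q)\ \cong\ \bigoplus_{Q'}\ \Ind^{N_G(Q)}_{{}^{g_{Q'}}N_P(Q')}(k),$$
where $Q'$ runs over the $P$-conjugacy classes of $G$-conjugates of $Q$ inside $P$ and $g_{Q'}\in G$ satisfies ${}^{g_{Q'}^{-1}}Q=Q'$; by the reductions each group ${}^{g_{Q'}}N_P(Q')$ is a Sylow $p$-subgroup of $N_G(Q)$. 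Next, $\Sc(G,P)(Q)$ has the trivial $kN_G(Q)$-module as a quotient, because the augmentation $\Ind^G_P k\to k$ restricts nontrivially to the Scott summand and its Brauer construction is the nonzero augmentation of $k[(G/P)^Q]$. Since the coset $1\cdot P$ lies in $(G/P)^Q$ with $N_G(Q)$-stabilizer $N_P(Q)$, these facts force $\Sc(N_G(Q),N_P(Q))\mid\Sc(G,P)(Q)$, and restricting to $L$ — where $[N_G(Q):L]$ is prime to $p$ and $N_P(Q)\in\mathrm{Syl}_p(L)$ — standard properties of Scott modules give $\Sc(L,N_P(Q))\mid\Res^{N_G(Q)}_L(\Sc(G,P)(Q))$, hence
$$\Res^{L}_{C_G(Q)}\big(\Sc(L,N_P(Q))\big)\ \Big|\ \Res^{N_G(Q)}_{C_G(Q)}\big(\Sc(G,P)(Q)\big).$$

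\textbf{Both implications.} The forward implication is then immediate: if $\Res^{N_G(Q)}_{C_G(Q)}(\Sc(G,P)(Q))$ is indecomposable it coincides with its summand $\Res^{L}_{C_G(Q)}(\Sc(L,N_P(Q)))$, which is therefore indecomposable. For the converse I would assume the right-hand condition and suppose, for a contradiction, that $Q\leq P$ has maximal order with $\Res^{N_G(Q)}_{C_G(Q)}(\Sc(G,P)(Q))$ decomposable. The case $Q=P$ gets excluded directly: there $\Sc(G,P)(P)\cong\Sc(N_G(P),P)$, which since $P\trianglelefteq N_G(P)$ is inflated from the projective cover of the trivial $k[N_G(P)/P]$-module, and its restriction to $C_G(P)$ is the inflation to $C_G(P)$ of the restriction of that projective cover to the subgroup $PC_G(P)/P$ — a normal subgroup of $p'$-index by saturation; since restriction of the projective cover of the trivial module along a normal subgroup of $p'$-index is again the projective cover of the trivial module, this restriction is indecomposable, and it literally agrees with the right-hand side at $Q=P$, which is the inflation to $C_G(P)$ of $\Sc(PC_G(P)/P,1)$. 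For $Q\subsetneq P$, write $\Res^{N_G(Q)}_{C_G(Q)}(\Sc(G,P)(Q))=\Res^{L}_{C_G(Q)}(\Sc(L,N_P(Q)))\oplus Z$; the first summand is indecomposable by hypothesis, so $Z\neq0$, and since $Q\neq1$ (otherwise the whole module is the indecomposable $\Sc(G,P)$) no summand is projective, so an indecomposable summand $Z_0$ of $Z$ has nontrivial vertex $V$. Because $Q$ acts trivially on $\Sc(G,P)(Q)$ one gets $V\not\leq Q$, so $R:=QV\supsetneq Q$ normalizes $Q$; using the transitivity $(\Sc(G,P)(Q))(V)\cong\Sc(G,P)(R)$ together with the compatibility of the Brauer construction with restriction, $Z_0$ is carried to a nonzero direct summand of $\Res^{N_G(R)}_{C_G(R)}(\Sc(G,P)(R))$, and one must check that this summand does not already lie inside $\Res^{N_P(R)C_G(R)}_{C_G(R)}(\Sc(N_P(R)C_G(R),N_P(R)))$, so that $\Res^{N_G(R)}_{C_G(R)}(\Sc(G,P)(R))$ is decomposable, contradicting the maximality of $Q$.

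\textbf{Main obstacle.} The heart of the matter is this last step: pinning down precisely which indecomposable summands of $k[(G/P)^Q]$ occur in the Scott summand $\Sc(G,P)(Q)$, and proving that the hypotheses force every indecomposable summand of $\Res^{N_G(Q)}_{C_G(Q)}(\Sc(G,P)(Q))$ that is not already present in $\Res^{L}_{C_G(Q)}(\Sc(L,N_P(Q)))$ to be detected — through the Brauer construction at its vertex — by the corresponding restriction attached to a strictly larger $p$-subgroup. Keeping simultaneous control of vertices, of the trivial-quotient property, and of the various Mackey components under these restrictions and iterated Brauer constructions is the delicate part of the argument.
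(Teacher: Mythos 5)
Your forward implication is essentially the paper's: you correctly establish that $\Sc(L,N_P(Q))$ (with $L=N_P(Q)C_G(Q)$) is a direct summand of $\Res^{N_G(Q)}_{L}(\Sc(G,P)(Q))$, so indecomposability over $C_G(Q)$ of the latter forces it to equal (and hence forces indecomposability of) the former. The converse, however, contains a genuine gap that you yourself flag under ``Main obstacle'', and there is also an unproved claim earlier in that paragraph. You assert that the vertex $V$ of the extra summand $Z_0$ satisfies $V\not\leq Q$; this is not justified: $V$ is a subgroup of $C_G(Q)$, and since $Z(Q)$ acts trivially on $\Sc(G,P)(Q)$ one only gets $Z(Q)\leq V$, so $V\leq Q$ (i.e.\ $V=Z(Q)$) is a priori possible, in which case $R=QV=Q$ and the induction step collapses. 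More seriously, even granting $R\supsetneq Q$, you have no control over whether $Z_0(V)$ escapes the summand $\Res^{N_P(R)C_G(R)}_{C_G(R)}(\Sc(N_P(R)C_G(R),N_P(R)))$, so you cannot conclude that the restriction at $R$ is decomposable.

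The paper avoids both problems by not attempting to track the non-Scott summands of $\Res^{N_G(Q)}_{C_G(Q)}(M(Q))$ at all. Instead, its Lemma~\ref{notBrauerindec} proves the much stronger structural fact that for $Q$ of maximal order with decomposable Brauer quotient over $C_G(Q)$, the module $M(Q)$ is already indecomposable as a $kN_G(Q)$-module and remains indecomposable after restriction to $L=N_P(Q)C_G(Q)$, being isomorphic to $\Sc(L,N_P(Q))$ there. That is, the decomposition first appears only at the $C_G(Q)$ level, which instantly contradicts the hypothesis of the theorem. The indecomposability over $N_G(Q)$ is obtained by applying Lemma~\ref{control-scott}(i) (to pin the vertex $S$ of any extra summand into $Q\leq S\leq N_P(Q)$) together with the Burry--Carlson--Puig theorem (to rule out $S=Q$); one then applies the maximality of $|Q|$ at the larger subgroup $S$. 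The further indecomposability over $L$ is a separate argument exploiting that $L$ is normal in $N_G(Q)$: Mackey writes $\Res^{N_G(Q)}_L M(Q)$ as a sum of $N_G(Q)\cap N_G(R)$-conjugates of a single indecomposable $kL$-module with vertex $R=N_P(Q)$, and applying the Brauer construction at $R$ shows that more than one such conjugate would make $M(R)$ decomposable over $C_G(R)$, again against maximality. These are precisely the two ingredients missing from your argument, and they replace, rather than implement, your proposed strategy of pushing the offending summand to a larger subgroup $R=QV$.
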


It is shown in \cite[Theorem 1.2]{KeKuMi} that if $P$ is an abelian 
$p$-subgroup of $G$, and if the fusion system $\Ff_P(G)$ is 
saturated, then the $kG$-Scott module $\Sc(G,P)$  
is Brauer indecomposable. The following result extends this
in some cases to non-abelian $P$.

\begin{thm} \label{saturationNew} 
Let $P$ be a $p$-subgroup of a finite group $G$. 
Let $\Ff = \Ff_P(G)$ be the fusion system of $G$ on $P$. Suppose that 
$\Ff$ is saturated, and that $\Ff= \Ff_P(N_G(P))$. Suppose that, for every 
subgroup $Q$ of $P$, at least one of the following  holds:  
\begin{enumerate}
\item[{\rm (a)}] $N_P(Q) = QC_P(Q)$.
\item[{\rm (b)}] $C_G(Q)$ is $p$-nilpotent. 
\end{enumerate}
Then  $\Sc(G,P)$ is Brauer indecomposable.
\end{thm}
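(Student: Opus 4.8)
The plan is to deduce this from Theorem~\ref{ScottBrauerindec}. Since $\Ff$ is saturated and equal to $\Ff_P(N_G(P))$, that criterion reduces the statement to showing, for every subgroup $Q$ of $P$, that
$$M_Q \;:=\; \Res^{N_P(Q)C_G(Q)}_{C_G(Q)}\bigl(\Sc(N_P(Q)C_G(Q),\,N_P(Q))\bigr)$$
is indecomposable. So I would fix $Q$, abbreviate $H = N_P(Q)C_G(Q)$, $L = C_G(Q)$ and $R = N_P(Q)$, and first record the elementary facts that $H \leq N_G(Q)$ (hence $Q \nor H$), that $L \nor H$, that $H = RL$, and that $R \cap L = C_P(Q)$. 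The common idea for both cases (a) and (b) is to produce a normal subgroup $N \nor H$ for which $\Sc(H,R) = \Inf^H_{H/N}\bigl(\Sc(H/N, RN/N)\bigr)$, and then to combine the identity $\Res^H_L \circ \Inf^H_{H/N} = \Inf^L_{L/(L\cap N)} \circ \Res^{H/N}_{LN/N}$ with the fact that inflation is exact and fully faithful, and hence preserves and reflects indecomposability.

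In case~(a) I would take $N = Q$. Here $Q \leq R$ and $Q \nor H$, so $k[H/R] = \Inf^H_{H/Q}\bigl(k[(H/Q)/(R/Q)]\bigr)$ and inflation carries the unique indecomposable summand with trivial quotient to the corresponding one, whence $\Sc(H,R) = \Inf^H_{H/Q}\bigl(\Sc(H/Q, R/Q)\bigr)$. The hypothesis $N_P(Q) = QC_P(Q)$ says $R = Q(R\cap L)$, which forces $H = QL$; thus $LQ/Q = H/Q$, the functor $\Res^{H/Q}_{LQ/Q}$ above is the identity, and $M_Q = \Inf^L_{L/Z(Q)}\bigl(\Sc(H/Q, R/Q)\bigr)$, which is indecomposable.

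In case~(b) I would take $N = K := O_{p'}(C_G(Q))$, which is characteristic in $L \nor H$ and hence normal in $H$. A short order computation using the $p$-nilpotency of $C_G(Q)$ gives $|H/K| = |N_P(Q)|\,|C_G(Q)|_p/|C_P(Q)|$, a power of $p$, so that $H/K$ and $L/K$ are $p$-groups and $K = O_{p'}(H)$. Writing $H = K \rtimes S$ with $S$ a Sylow $p$-subgroup of $H$ containing $R$, the module $k[H/R] = \Ind^H_S\bigl(k[S/R]\bigr)$ decomposes, via Clifford theory relative to $K$, so that the summand lying over the trivial $kK$-module is $\Inf^H_{H/K}\bigl(k[(H/K)/\bar R]\bigr)$, where $\bar R = RK/K$; since $H/K$ is a $p$-group this summand is indecomposable with trivial quotient, hence equals $\Sc(H,R)$. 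Finally, since $H = RL$ one has $H/K = \bar R\,(L/K)$, so Mackey's formula for $\Res^{H/K}_{L/K}\bigl(k[(H/K)/\bar R]\bigr)$ involves a single double coset and yields $k[(L/K)/(\bar R\cap (L/K))]$, which is indecomposable because $L/K$ is a $p$-group. Hence $M_Q = \Inf^L_{L/K}\bigl(k[(L/K)/(\bar R\cap (L/K))]\bigr)$ is indecomposable.

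The step I expect to be the main obstacle is the identification of $\Sc(H,R)$ with an inflation in case~(b): one needs the block-theoretic input that $O_{p'}(H)$ acts trivially on every indecomposable module in the principal block of $kH$ — equivalently, that the blocks of $kH$ are parametrised by the $H$-orbits of blocks of $k\,O_{p'}(H)$, the principal one sitting over the trivial $k\,O_{p'}(H)$-module — and then one must recognise the resulting module over the quotient as the correct Scott module. Once that identification is secured, the rest is a routine Mackey and inflation bookkeeping exercise, simplified by the fact that all the quotient groups that occur are $p$-groups and $p$-permutation modules over $p$-groups of the form $k[S/T]$ are indecomposable.
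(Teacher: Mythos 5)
Your proof is correct, and it takes a genuinely different route from the paper's. The paper does not invoke Theorem~\ref{ScottBrauerindec} at all; instead it argues by contradiction, choosing a subgroup $Q$ of maximal order in $P$ for which $M(Q)$ is $kC_G(Q)$-decomposable, and uses Lemma~\ref{notBrauerindec} directly to identify $\Res^{N_G(Q)}_{RC_G(Q)}(M(Q))$ with $\Sc(RC_G(Q),R)$. Then, in case~(b), it observes that $M(Q)=\Sc(N_G(Q),R)$ lies in the principal block of $kN_G(Q)$, reduces ``without loss'' to $O_{p'}(N_G(Q))=1$ (so all the relevant subgroups become $p$-groups), and finishes with a Mackey computation. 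You instead use Theorem~\ref{ScottBrauerindec} as a black box to reduce outright to the local question of whether $\Res^{H}_{C_G(Q)}(\Sc(H,R))$ is indecomposable for $H=RC_G(Q)$, with no minimal-counterexample framing, and in case~(b) you work intrinsically inside the $p$-nilpotent group $H$, inflating from the $p$-group $H/O_{p'}(H)$ rather than passing to a quotient of $N_G(Q)$. The block-theoretic input (triviality of the $O_{p'}$-action on modules in the principal block of a $p$-nilpotent group) and the two concluding observations (indecomposability of transitive permutation modules of $p$-groups; a single Mackey double coset because $H=RL$) are the same in both treatments, so the heart of the argument is shared; what your route buys is a cleaner modularity between the global reduction, packaged once in Theorem~\ref{ScottBrauerindec}, and a purely local verification, while the paper bundles both into a single contradiction argument sitting directly on Lemma~\ref{notBrauerindec}.
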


If $P$ is a common subgroup of  two groups $G$ and $H$, we denote
by $\Delta P$ the `diagonal' subgroup $\Delta P=$ 
$\{(u,u)\ |\ u\in P\}$ of $G\times H$.

\begin{crl} \label{saturation}  
Let $G$ be a finite group and $P$ a Sylow  $p$-subgroup of $G$. 
Set $M= {\mathrm{Sc}}(G\times N_G(P), \Delta P)$.
Suppose that, for every subgroup $Q$ of $P$, at least 
one of the following  holds:  
\begin{enumerate}
\item[{\rm (a)}] $N_P(Q) = QC_P(Q)$.
\item[{\rm (b)}] $C_G(Q)$ is $p$-nilpotent. 
\end{enumerate}
Then  $M$ is Brauer indecomposable.
\end{crl}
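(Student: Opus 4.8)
The plan is to deduce the corollary from Theorem~\ref{saturationNew}, applied to the group $\tilde G := G \times N_G(P)$ and its $p$-subgroup $\Delta P$, identified with $P$ via $u \mapsto (u,u)$; note that $M = \Sc(\tilde G, \Delta P)$ and that $P$, being a Sylow $p$-subgroup of $G$ contained in $N_G(P)$, is also a Sylow $p$-subgroup of $N_G(P)$. So I must check that $\Ff_{\Delta P}(\tilde G)$ is saturated, that it equals $\Ff_{\Delta P}(N_{\tilde G}(\Delta P))$, and that for each subgroup $\Delta R$ of $\Delta P$ (where $R \le P$) condition (a) or (b) of Theorem~\ref{saturationNew} holds for $\tilde G$ and $\Delta R$.

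The main step is to identify $\Ff_{\Delta P}(\tilde G)$. Every subgroup of $\Delta P$ has the form $\Delta R$ for a unique $R \le P$, and for $R, S \le P$ an element $(g,n) \in G \times N_G(P)$ satisfies ${}^{(g,n)}(\Delta R) \le \Delta S$ precisely when ${}^g r = {}^n r \in S$ for all $r \in R$, i.e.\ $n^{-1}g \in C_G(R)$ and ${}^n R \le S$; the resulting morphism $\Delta R \to \Delta S$ is then $(r,r) \mapsto ({}^n r, {}^n r)$. Since one may always take $g = n$, the isomorphism $u \mapsto (u,u)$ carries $\Ff_P(N_G(P))$ onto $\Ff_{\Delta P}(\tilde G)$; in particular $\Ff_{\Delta P}(\tilde G)$ is saturated, being isomorphic to the fusion system of the finite group $N_G(P)$ on its Sylow $p$-subgroup $P$. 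Taking $R = S = P$ in the above shows $N_{\tilde G}(\Delta P) = \{(nc, n) : n \in N_G(P),\, c \in C_G(P)\}$, and since $C_G(P)$ centralizes every $R \le P$ the same computation gives $\Ff_{\Delta P}(N_{\tilde G}(\Delta P)) = \Ff_{\Delta P}(\tilde G)$. Thus the two fusion-theoretic hypotheses of Theorem~\ref{saturationNew} hold for $\tilde G$ and $\Delta P$.

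Finally I translate the numbered conditions. One has $N_{\Delta P}(\Delta R) = \Delta(N_P(R))$, $C_{\Delta P}(\Delta R) = \Delta(C_P(R))$ and $\Delta R \cdot \Delta(C_P(R)) = \Delta(R\,C_P(R))$, so $N_{\Delta P}(\Delta R) = \Delta R\,C_{\Delta P}(\Delta R)$ if and only if $N_P(R) = R\,C_P(R)$; and $C_{\tilde G}(\Delta R) = C_G(R) \times (N_G(P) \cap C_G(R))$, which is $p$-nilpotent whenever $C_G(R)$ is, since subgroups and finite direct products of $p$-nilpotent groups are $p$-nilpotent. Hence whichever of (a), (b) holds for $G$ and $R$ yields the corresponding condition for $\tilde G$ and $\Delta R$, and Theorem~\ref{saturationNew} applies, showing that $M$ is Brauer indecomposable. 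The only substantive point is the explicit description of $\Ff_{\Delta P}(\tilde G)$ and of $N_{\tilde G}(\Delta P)$; after that everything is a direct translation across $\Delta P \cong P$, and there is no real obstacle — controlled fusion of $\Delta P$ in $\tilde G$ is essentially automatic because the second coordinate already runs over all of $N_G(P)$.
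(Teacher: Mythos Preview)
Your proof is correct and follows essentially the same approach as the paper: both verify the hypotheses of Theorem~\ref{saturationNew} for $\tilde G = G\times N_G(P)$ and $\Delta P$ by identifying $\Ff_{\Delta P}(\tilde G)$ with $\Ff_P(N_G(P))$ and computing $C_{\tilde G}(\Delta R) = C_G(R)\times C_{N_G(P)}(R)$. The paper's argument is terser (it simply asserts that $\Ff_{\Delta P}(G\times H)$ equals $\Ff_{\Delta P}(\Delta H)$, which lies inside $\Ff_{\Delta P}(N_{\tilde G}(\Delta P))$), while you spell out the conjugation computation and the explicit form of $N_{\tilde G}(\Delta P)$; the content is the same.
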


\begin{rmk}
For $P$ abelian, this is Corollary 1.4 of \cite{KeKuMi}, which follows 
also from \cite[Theorem]{KoLi}.
Examples of non-abelian $p$-groups to which the above applies 
are all groups of order $p^3$ and metacyclic $p$-groups  of the form 
$M_{n+1}(p) \cong C_{p^n}\rtimes C_p$, see \cite[p.190]{Go},
where $C_m$ denotes a cyclic group of order $m$, for any positive
integer $m$. See the Example \ref{examplep3} below for a stable 
equivalence of Morita type which is constructed making use of Corollary
\ref{saturation}.
\end{rmk} 

The above results will be proved in section 3. We adopt the following 
notation and conventions. All modules over finite group algebras
are assumed to be finitely generated unitary left modules.
We write $H\leq G$ if $H$ is a subgroup of a group $G$, and 
$H\unlhd G$ if $H$ is normal in $G$. The trivial $kG$-module will
be denoted again by $k$. For $G$ a group, $H$ a subgroup of 
$G$, $M$ a $kG$-module and $N$ a $kH$-module, we write as usual 
$\Res^G_H(M)$ for the restriction of $M$ from $kG$ to $kH$ 
and $\Ind_H^G(N)$ for the induction of $N$ from $kH$ to $kG$.
For a subset $S$ of $G$ and an element $g\in G$, we write $^g\!S$ 
for $gSg^{-1}$, and for $h\in G$, we write $^g{\!}h = ghg^{-1}$.
For $H, K \leq G$ we write $H\leq_G K$ when $^g{\!}H \leq K$ for 
an element $g\in G$. As mentioned before, given a $p$-subgroup
$P$ of a finite group $G$ and a $kG$-module $M$, 
we write $M(P)$ for the Brauer construction with respect to $P$
applied to $M$; see \cite[p.402]{Br1} or \cite[pp.91 and 219]{The}. 
We denote by $\Ff_P(G)$ the fusion system of $G$ on $P$; that is,
$\Ff_P(G)$ is the category whose objects are the subgroups of $P$ 
and whose morphisms from $Q$ to $R$ are the group homomorphisms
induced by conjugation by elements of $G$;
see \cite[Definition I.2.1]{AKO} and \cite[p.83]{Li}.
If $P$ is a Sylow $p$-subgroup of $G$, then $\Ff_P(G)$ is {\it saturated},
see \cite[Definition I.2.2]{AKO}. If $\Ff_P(G)=$ $\Ff_P(N_G(P))$,
then the saturation of $\Ff_P(G)$ is equivalent to requiring
that $N_G(P)/PC_G(P)$ has order prime to $p$. For any remaining 
notation and terminology, see the books of \cite{NaTsu} and \cite{The}, 
and also \cite{AKO} and \cite{Li} for fusion systems.

\section{Lemmas} \label{lemmas}

This section contains some technicalities needed for the proofs of 
the main results in the section. We start with a very brief review 
of some basic properties of Scott modules. Let $G$ be a  finite group, 
$H$ a subgroup of $G$, and $P$ a  Sylow $p$-subgroup of $H$.  Let $M$ 
be a $p$-permutation $kG$-module. In particular, $M$ has a $k$-basis
$X$ which is permuted by the action of $P$. By \cite[\S 1]{Br2} or
\cite[Proposition (27.6)]{The}, the image in $M(P)$ of the
subset $X^P$ of $P$-fixed points in $X$ is a $k$-basis of
$M(P)$, and we have a direct sum decomposition of $kN_G(P)$-modules
$\Res^G_{N_G(P)}(M)=$ $M(P)\oplus N$, where $N$ is the span of the 
$P$-orbit sums of $X\setminus X^P$. For any subgroup $Q$ of $P$ we 
have $X^P\subseteq$ $X^Q$. In particular, if $M(P)\neq$ $\{0\}$, then 
$M(Q)\neq$ $\{0\}$ for any subgroup $Q$ of $P$. By \cite[(1.3)]{Br2}, 
if $M$ is an indecomposable $p$-permutation $kG$-module, then 
$M(P)\neq$ $\{0\}$ if and only if $P$ is contained  in a vertex of $M$.  
By \cite[(3.2) Theorem]{Br2}, if $P$ is a vertex of $M$, then $M(P)$ 
is the Green correspondent of $M$. Frobenius' reciprocity implies that 
$\Hom_{kG}(\Ind^G_H(k),k)\cong$ $\Hom_{kH}(k,k)\cong$ $k$. Thus exactly 
one indecomposable direct summand of $\Ind^G_H(k)$ has a quotient
isomorphic to the trivial $kG$-module. This summand is the
{\it Scott module} ${\mathrm{Sc}}(G,H)$. 
Under the above isomorphism the identity 
map on $k$ (viewed as a $kP$-module) corresponds to the unique 
$kG$-homomorphism $\eta : \Ind^G_H(k)\to$ $k$ sending 
each $y\otimes 1_k$ to $1$ for any $y\in$ $G$. Thus 
the Scott module $\Sc(G,H)$ is, up to isomorphism, the unique 
indecomposable direct summand of $\Ind^G_H(k)$ which is not contained 
in $\ker(\eta)$. Applying the Brauer construction to $\eta$ yields a 
non-zero map $\eta(P) : (\Ind^G_H(k))(P)\to$ $k$, because the element 
$1\otimes 1_k$ is a $P$-fixed element of the $P$-stable basis consisting 
of the elements $y\otimes 1_k$, with $y$ running over a set of 
representatives of the cosets $G/H$ in $G$. This shows in particular 
that $\Sc(G,H)$ has $P$ as a vertex and therefore must coincide with 
$\Sc(G,P)$. We will use these facts without further reference.
The following lemma is essentially a special case of a result of 
H.~Kawai \cite[Theorem 1.7]{Ka}. 

\begin{lmm} \label{restrict-scott} 
Let $G$ be a finite group, and let $P$ and $Q$ be $p$-subgroups of 
$G$ such that $Q\leq P$. Suppose that for any $g \in G $ satisfying
$Q \leq \,{^g{\!}P}$ we have $|N_{^g{P}}(Q)| \leq |N_P(Q)|$. Let $M$ be 
an indecomposable $p$-permutation $kG$-module with vertex $P$. Set 
$H=$ $N_G(Q)$. Then $\Res^G_H(M)$ has an indecomposable direct 
summand $X$ satisfying $X(N_P(Q))\neq$ $\{0\}$, and any such summand 
has $N_P(Q)$ as a vertex. In particular, 
$\Sc(H, \, N_P(Q))$ is isomorphic to a direct summand of 
$\Res^G_H(\Sc(G,P))$ and of $(\Sc(G,P))(Q)$.
\end{lmm}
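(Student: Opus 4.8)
The plan is to prove the statement in three steps: (i) $\Res^G_H(M)$ has an indecomposable direct summand $X$ with $X(N_P(Q))\neq\{0\}$; (ii) every such summand has vertex $N_P(Q)$; and (iii) for $M=\Sc(G,P)$ the module $\Sc(H,N_P(Q))$ occurs as a direct summand of $(\Sc(G,P))(Q)$, hence also of $\Res^G_H(\Sc(G,P))$ since the former is a direct summand of the latter. Step (i) is immediate: since $N_P(Q)\leq P$ and $P$ is a vertex of $M$, Brou\'e's criterion \cite[(1.3)]{Br2} gives $M(N_P(Q))\neq\{0\}$, and as the Brauer construction depends only on the action of the subgroup in question it commutes with restriction, so $(\Res^G_H(M))(N_P(Q))=\Res^{N_G(N_P(Q))}_{N_H(N_P(Q))}(M(N_P(Q)))\neq\{0\}$; hence some indecomposable summand $X$ of $\Res^G_H(M)$ satisfies $X(N_P(Q))\neq\{0\}$.

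For step (ii), let $X$ be such a summand and $W$ a vertex of it. From $X(N_P(Q))\neq\{0\}$ we get that $N_P(Q)$ is $H$-subconjugate to $W$. On the other hand $M$ is an indecomposable $p$-permutation module, hence has trivial source, so $M\mid\Ind^G_P(k)$, and the Mackey formula gives $\Res^G_H(\Ind^G_P(k))\cong\bigoplus_g\Ind^H_{H\cap{}^gP}(k)$, the sum over double coset representatives $g$; since $H=N_G(Q)$ we have $H\cap{}^gP=N_{{}^gP}(Q)$, so $W$ is $H$-subconjugate to $N_{{}^gP}(Q)$ for some $g\in G$. Combining the two subconjugacy relations, some $H$-conjugate of $N_P(Q)$ lies inside $N_{{}^gP}(Q)\leq{}^gP$; as the conjugating element lies in $H=N_G(Q)$ and $Q\leq N_P(Q)$, this forces $Q\leq{}^gP$, so the hypothesis applies and yields $|N_{{}^gP}(Q)|\leq|N_P(Q)|$. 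The chain $|N_P(Q)|\leq|W|\leq|N_{{}^gP}(Q)|\leq|N_P(Q)|$ now collapses to equalities, so $W$ is $H$-conjugate to $N_P(Q)$.

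For step (iii), take $M=\Sc(G,P)$. Write $\Ind^G_P(k)=\Sc(G,P)\oplus Y$ where the canonical map $\eta\colon\Ind^G_P(k)\to k$ vanishes on $Y$; then $\eta$ restricts to a surjection $\pi\colon\Sc(G,P)\to k$, and the projection to $\Sc(G,P)$ of the $P$-fixed basis vector $1\otimes 1_k$ of $\Ind^G_P(k)$ is a $P$-fixed (hence $N_P(Q)$-fixed) vector on which $\pi$ is nonzero, which shows $\pi$ splits over $kN_P(Q)$; the same reasoning, applied after the Brauer construction at $Q$, shows that the induced surjection $\bar\eta\colon(\Sc(G,P))(Q)\to k$ (coming from $\eta(Q)$, which vanishes on $Y(Q)$) also splits over $kN_P(Q)$. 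Since the Brauer construction takes a $kN_P(Q)$-split surjection of $p$-permutation modules to a surjection, $\bar\eta(N_P(Q))\colon((\Sc(G,P))(Q))(N_P(Q))\to k$ is onto; here $((\Sc(G,P))(Q))(N_P(Q))=(\Sc(G,P))(N_P(Q))$ because $Q\unlhd N_P(Q)$, and this is nonzero as $N_P(Q)\leq P$. Decomposing $(\Sc(G,P))(Q)=\bigoplus_iZ_i$ into indecomposables, surjectivity of $\bigoplus_i\bar\eta_i(N_P(Q))$ forces $\bar\eta_j(N_P(Q))\neq0$ for some $j$, so $Z_j(N_P(Q))\neq\{0\}$ and $\bar\eta_j\neq0$. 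Now $Z_j$ is an indecomposable summand of $\Res^G_H(\Sc(G,P))$ with $Z_j(N_P(Q))\neq\{0\}$, so it has vertex $N_P(Q)$ by step (ii); being an indecomposable $p$-permutation module with vertex the $p$-group $N_P(Q)$ and admitting a nonzero homomorphism to $k$, and since $\dim_k\Hom_{kH}(\Ind^H_{N_P(Q)}(k),k)=1$, we conclude $Z_j\cong\Sc(H,N_P(Q))$.

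The main obstacle, I expect, is step (ii): it is the only place the hypothesis $|N_{{}^gP}(Q)|\leq|N_P(Q)|$ is genuinely used, and the care needed is to check that the double coset representative $g$ produced by the Mackey decomposition actually satisfies $Q\leq{}^gP$, so that the hypothesis may be invoked and the upper and lower bounds on $|W|$ squeeze together. The rest is a careful assembly of standard facts about the Brauer construction (compatibility with restriction, its behaviour on $kN_P(Q)$-split surjections, and the description of indecomposable $p$-permutation modules with $p$-group vertex and trivial quotient as Scott modules); the one thing to be avoided in step (iii) is trying to match up the two different direct-sum decompositions of $(\Ind^G_P(k))(Q)$ — into Brou\'e pieces $\Ind^H_{N_{{}^gP}(Q)}(k)$ on the one hand and into $(\Sc(G,P))(Q)\oplus Y(Q)$ on the other — since these need not refine one another; working with the Brauer construction at $N_P(Q)$ directly, as above, sidesteps this.
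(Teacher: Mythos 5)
Your proof is correct and follows essentially the same route as the paper. Step (i) is the same observation; step (ii) is the paper's Mackey argument rephrased in terms of subconjugacy inequalities (the paper tracks explicit conjugating elements $h\in H$ and a vertex $R\supseteq N_P(Q)$, you track orders and $H$-subconjugacy, but the key squeeze $|N_P(Q)|\le |W|\le |N_{{}^g\!P}(Q)|\le |N_P(Q)|$, with the crucial check that $Q\le {}^g\!P$ so the hypothesis applies, is the same); and step (iii) identifies $\Sc(H,N_P(Q))$ inside $M(Q)$ by exhibiting a summand with nonzero Brauer construction at $N_P(Q)$ and a nonzero map to $k$, exactly as the paper does — the only difference is that you locate the summand by first decomposing $M(Q)$ and using a $kN_P(Q)$-splitting of $\bar\eta$, whereas the paper first locates a summand $X\cong\Sc(H,N_P(Q))$ of $\Res^G_H(M)$ via $\eta(N_P(Q))\neq 0$ and then observes $Q$ acts trivially on $X$ so $X$ lies in $M(Q)$. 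Both directions use the standard fact that $M(Q)$ is a $kN_G(Q)$-direct summand of $\Res^G_{N_G(Q)}(M)$, so the arguments are interchangeable.
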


\begin{proof}  
We have $H\cap P=$ $N_P(Q)$, and since $M(P)$ is non-zero, so is 
$M(H\cap P)$. Thus there is an indecomposable direct summand $X$ of 
$\Res^G_{H}(M)$ such that $X(H\cap P)\neq$ $\{0\}$.
Let $R$ be a  vertex of $X$ containing $H\cap P$. 
Since $P$ is a vertex of $M$, it follows that
$M$ is isomorphic to a direct summand of $\Ind^G_P(k)$. 
The Mackey decomposition formula implies that $X$ is isomorphic to
a direct summand of
$$
\Res^{G}_{H}(\Ind^G_P(k))=
\bigoplus_y\ \Ind^{H}_{H\cap \, {^y{\!}{P}}}(k)\ ,
$$
where $y$ runs over a set of representatives of the double cosets
$H\backslash G/P$ in $G$. The indecomposability of $X$ and the
Krull-Schmidt theorem imply that there is $y\in$ $G$ such that
$X$ is isomorphic to a direct summand of 
$\Ind^{H}_{H\cap \,{^y{\!}{P}}}(k)$. 
Then $H\cap \, {^y{\!}{P}}$ contains a vertex
$S$ of $X$. Since the vertices of an indecomposable module are 
conjugate, it follows that there is $h\in$ $H$ such that
$S=$ ${^h{\!}{R}}$. The element $h$ normalises $Q$, and hence
$Q\leq S\leq$ $H\cap{^y{\!}{P}}$. 
This implies $S \leq$ $N_{\, {^y{\!}P}}(Q)$.
The assumptions imply further that $|S|\leq$ $|N_P(Q)|\leq$ $|R|$.
Since $R$ and $S$ are conjugate, they have the same order, whence
$R=$ $N_P(Q)$ is a vertex of $X$. For the second statement,
suppose that $M=$ $\Sc(G,P)$. That is, $M$ is, up to isomorphism,
the unique indecomposable direct summand of $kG/P$ which is not in
the kernel of the $kG$-homomorphism $kG/P\to$ $k$ sending
each coset $yP$ to $1$ where $y\in G$. 
As mentioned at the beginning of this
section, the trivial coset $P$ is a $P$-fixed point of the basis of 
$kG/P$ consisting of the $P$-cosets in $G$, and hence applying the 
Brauer construction to a non-zero $kG$-homomorphism $M\to$ $k$
yields a non-zero map $M(P)\to$ $k$. Then also the map
$M(R)\to$ $k$ induced by a non-zero $kG$-homomorphism $M\to$ $k$
is nonzero. It follows that $\Res^G_H(M)$ has an indecomposable
direct summand $X$ satisfying $X(R)\neq$ $\{0\}$ 
such that there is a non-zero $kH$-homomorphism $X\to k$. By the first
statement, $R$ is a defect group of $X$. Thus $X\cong$ $\Sc(H,R)$.
This shows that $\Sc(H,R)$ is isomorphic to a direct summand of
$\Res^G_{N_G(Q)}(M)$. Since $R$ contains $Q$ and $Q$ is
normal in $H$, it follows that $Q$ acts trivially on
$\Sc(H,Q)$, and thus $\Sc(H,Q)$ is isomorphic to a direct summand
of $M(Q)$.
\end{proof} 

In fusion theoretic terminology, the hypothesis on the maximality
of $|N_P(Q)|$ in the previous lemma is equivalent to requiring that
$Q$ is fully $\Ff$-normalised. If $\Ff=$ $N_\Ff(P)$, then every
subgroup of $P$ is fully $\Ff$-normalised, which explains why this
hypothesis is no longer needed in the second statemement of
the next lemma. The proof of the first statement of the next lemma 
is essentially in \cite[Theorem 1.2]{KeKuMi}.  

\begin{lmm}\label{control-scott} 
Let $G$ be a finite group and 
$P$ a $p$-subgroup of $G$. Set $\Ff=\Ff_P(G)$. Assume that $\Ff$ is 
saturated and that $\Ff=$ $\Ff_P(N_G(P))$.  
\begin{enumerate}
  \item[{\rm{(i)}}]
Suppose that $M$ is an indecomposable $p$-permutation $kG$-module
with vertex $P$. Then for any subgroup $Q$ of $P$ and for any 
indecomposable direct summand $X$ of $\Res^G_{N_G(Q)}(M)$ 
satisfying $X(Q)\neq$ $\{0\}$, there is a vertex $R$ of $X$ such that 
$Q\leq R \leq P$.
  \item[{\rm{(ii)}}]
For any subgroup $Q$ of $P$, the module $\Sc(N_G(Q), N_P(Q))$ is a 
direct summand of $\Res_{N_G(Q)}^{G}(\Sc(G,P))$ and of
$(\Sc(G,P))(Q)$. 
\end{enumerate}
\end{lmm}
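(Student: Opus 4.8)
The plan is to treat the two statements in turn, using Lemma \ref{restrict-scott} as the main tool once the fusion-theoretic hypotheses have been unwound. For part (i), fix a subgroup $Q$ of $P$ and let $X$ be an indecomposable direct summand of $\Res^G_{N_G(Q)}(M)$ with $X(Q)\neq\{0\}$. First I would replace $Q$ by a suitable $G$-conjugate: since $\Ff$ is saturated, there exists $g\in G$ such that $Q'=\,{}^g\!Q$ is fully $\Ff$-normalised, and conjugation by $g$ carries $X$ to an indecomposable summand $X'$ of $\Res^G_{N_G(Q')}(M)$ with $X'(Q')\neq\{0\}$; a vertex of $X$ containing $Q$ corresponds under this conjugation to a vertex of $X'$ containing $Q'$, and conjugating back at the end will return the desired vertex of $X$ inside $P$ (here one uses that $X'$ has a vertex $R'$ with $Q'\leq R'\leq P$ forces, after applying ${}^{g^{-1}}(-)$, a vertex $R$ of $X$ with $Q\leq R\leq{}^{g^{-1}}\!P$, and then the hypothesis $\Ff=\Ff_P(N_G(P))$ together with saturation lets one adjust inside $N_G(P)$ to land $R$ in $P$). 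So one reduces to the case that $Q$ is fully $\Ff$-normalised, which is exactly the maximality hypothesis $|N_{{}^g\!P}(Q)|\leq|N_P(Q)|$ of Lemma \ref{restrict-scott}. Then Lemma \ref{restrict-scott} applied to $M$ gives that $X$ — being an indecomposable summand of $\Res^G_{N_G(Q)}(M)$ with $X(N_P(Q))\neq\{0\}$ (one checks $X(Q)\neq\{0\}$ upgrades to $X(N_P(Q))\neq\{0\}$, or more directly re-runs the Mackey argument of that lemma) — has $N_P(Q)$ as a vertex, and $Q\leq N_P(Q)\leq P$.

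For part (ii), the hypothesis $\Ff=\Ff_P(N_G(P))$ implies that every subgroup $Q$ of $P$ is fully $\Ff$-normalised, so the maximality hypothesis of Lemma \ref{restrict-scott} is automatically satisfied for every $Q\leq P$. Applying the second statement of Lemma \ref{restrict-scott} with $M=\Sc(G,P)$ then directly yields that $\Sc(N_G(Q),N_P(Q))$ is isomorphic to a direct summand of $\Res^G_{N_G(Q)}(\Sc(G,P))$ and of $(\Sc(G,P))(Q)$. This is essentially immediate once part (i)/Lemma \ref{restrict-scott} is in place.

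I expect the main obstacle to be the reduction to the fully normalised case in part (i): one must check carefully that conjugation transports the Brauer-construction condition $X(Q)\neq\{0\}$ and the vertex data correctly, and — the genuinely delicate point — that the two hypotheses ``$\Ff$ saturated'' and ``$\Ff=\Ff_P(N_G(P))$'' together really do let one push a vertex $R$ with $Q\leq R\leq{}^{g^{-1}}\!P$ back inside $P$ itself rather than merely inside some conjugate of $P$. The cleanest way to see this is probably to note that $\Ff=\Ff_P(N_G(P))$ says $N_G(P)$ controls $\Ff$-fusion, so that the inclusion $Q\leq R$ can be realised by a morphism in $\Ff$ and hence by conjugation by an element of $N_G(P)$, bringing $R$ into $P$ while keeping $Q$ in place; saturation guarantees the relevant extension of the conjugating isomorphism exists. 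Everything else is a routine invocation of the Krull–Schmidt theorem and the basic properties of the Brauer construction recorded in Section \ref{lemmas}.
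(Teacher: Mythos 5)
Your treatment of part (ii) is essentially correct and matches the paper's route: one verifies the maximality hypothesis of Lemma~\ref{restrict-scott} using the fact that $\Ff=\Ff_P(N_G(P))$ forces every subgroup of $P$ to be fully $\Ff$-normalised (the paper records this observation in the paragraph preceding the lemma, and in the proof of (ii) re-derives it concretely via an element $c\in C_G(Q)$ conjugating ${}^g\!P$ onto $P$), and then Lemma~\ref{restrict-scott} applies directly.

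Part (i), however, contains a genuine gap, and your approach there is structurally different from the paper's. First, the asserted ``upgrade'' from $X(Q)\neq\{0\}$ to $X(N_P(Q))\neq\{0\}$ is false: the Brauer construction's nonvanishing is monotone in the \emph{other} direction (nonvanishing at a larger group implies nonvanishing at its subgroups, not conversely). Since Lemma~\ref{restrict-scott} only governs summands with $X(N_P(Q))\neq\{0\}$, you cannot invoke it for an arbitrary summand with $X(Q)\neq\{0\}$. Second, you conclude that $N_P(Q)$ itself is a vertex of $X$, which is strictly stronger than the assertion of Lemma~\ref{control-scott}(i) (which only asks for \emph{some} vertex $R$ with $Q\leq R\leq P$) and is not what one should expect in general; indeed the proof of Lemma~\ref{notBrauerindec} explicitly contemplates summands $Y$ with $Y(Q)\neq\{0\}$ whose vertex is a proper subgroup of $N_P(Q)$ before deriving a contradiction from the extra maximality hypothesis there. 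The paper's proof of (i) avoids Lemma~\ref{restrict-scott} entirely: from $X(Q)\neq\{0\}$ and $Q\unlhd N_G(Q)$ one gets a vertex $R$ of $X$ with $Q\leq R$; then $X(R)\neq\{0\}$ forces $M(R)\neq\{0\}$, so $R$ lies in a vertex of $M$ and hence $Q\leq R\leq{}^g\!P$ for some $g\in G$; finally, the control hypothesis $\Ff=\Ff_P(N_G(P))$ produces $c\in C_G(Q)$ with ${}^{cg}\!P=P$, so ${}^c\!R$ is a vertex of $X$ with $Q\leq{}^c\!R\leq P$. Your closing paragraph does gesture at this last fusion-theoretic step, so the right idea is nearby, but the entry point via Lemma~\ref{restrict-scott} and the false ``upgrade'' claim would have to be replaced by the direct argument through $M(R)\neq\{0\}$. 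Also note that your preliminary reduction to a fully normalised conjugate of $Q$ is superfluous here, since under $\Ff=\Ff_P(N_G(P))$ every subgroup of $P$ is already fully $\Ff$-normalised.
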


\begin{proof} 
(i)  Let $X$ be an indecomposable direct summand of $\Res^G_{N_G(Q)}(M)$
such that $X(Q) \ne \{0\}$. There is a vertex $R$ of $X$ such 
that $Q\leq R$. Then $X(R)\neq$ $\{0\}$, hence $M(R)\neq$ $\{0\}$, 
and so $R$ is contained in a vertex of $M$. Since the vertices of $M$ 
are conjugate to $P$, it follows that there is $g\in$ $G$ such
that $Q\leq$ $R\leq$ ${^g{\!}{P}}$. Then ${^{g^{-1}}{\!}Q} \leq P$, which
implies that the map ${^{g^{-1}}{\!}Q} \to Q$ sending
$u$ to ${^g{\!}u}$ is an isomorphism in the fusion system $\Ff$. The 
assumption $\Ff=$ $\Ff_P(N_G(P))$ implies that there is an element 
$h\in N_G(P)$ such that $c=hg^{-1}\in C_G(Q)$. It follows that
$Q = {^cQ} \leq {^c{\!}R} \leq {^{cg}{\!}P} = {^h{\!}P} = P$.
Clearly ${^c{\!}R}$ is also a vertex of $X$, whence the statement. 

(ii) Let $g\in G$ with $Q\leq {^g{\!}P}$. By the argument in
the proof of (i), there is an element $h\in N_G(P)$ such that 
$c = hg^{-1} \in C_G(Q)$. Then $cg=h$ normalises $P$. Thus
conjugation by $c$ induces an isomorphism 
$N_{{^g{\!}P}}(Q)\cong$ $N_P(Q)$; 
in particular, both groups have the same order.
Therefore Lemma \ref{restrict-scott} implies the assertion.
\end{proof}

\section{Proof of the main result}

Let $G$ be a finite group and let $M$ be an indecomposable 
$p$-permutation $kG$-module with vertex $P$. If $Q$ is a $p$-subgroup
of $G$ which is not conjugate to a subgroup of $P$, then $M(Q)=$ $\{0\}$.
The property of $M(Q)$ being decomposable is invariant under
conjugation of $Q$ in $G$.
Thus if $M$ is not Brauer indecomposable, then there is a subgroup
$Q$ of $P$ such that $M(Q)$ is decomposable as a $kC_G(Q)$-module.
The key step towards proving the main results is the following lemma.

\begin{lmm} \label{notBrauerindec}
Let $G$ be a finite group and  $P$ a $p$-subgroup of $G$. Set 
$\Ff=\Ff_P(G)$. Assume that $\Ff= \Ff_P(N_G(P))$ and that $\Ff$ is 
saturated. Set $M=$ $\Sc(G,P)$. Suppose that $M$ is not 
Brauer indecomposable. Let $Q$ be a subgroup of maximal order in $P$ 
such that $\Res^{N_G(Q)}_{C_G(Q)}(M(Q))$ is decomposable. Then $Q$ is 
a proper subgroup of $P$ and setting $R=$ $N_P(Q)$, we have 
$$\Res^{N_G(Q)}_{RC_G(Q)}(M(Q)) \cong \Sc(RC_G(Q),R)\ .$$
In particular, $\Res^{N_G(Q)}_{RC_G(Q)}(M(Q))$ is indecomposable
with $R$ as a vertex.
\end{lmm}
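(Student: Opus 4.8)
The plan is to exploit the maximality of $Q$ together with Lemma~\ref{control-scott}. First I would observe that $Q$ must be a proper subgroup of $P$: indeed, if $Q = P$, then $M(P)$ is the Green correspondent of $M$ and hence indecomposable, and $C_G(P) \leq N_G(P)$ so the restriction $\Res^{N_G(P)}_{C_G(P)}(M(P))$ is a restriction of an indecomposable module to a subgroup over which $P$ acts trivially (since $P$ centralises $M(P)$ as a $kN_G(P)/P$-module after a standard argument, $C_G(P)$ being of index prime to $p$ in $N_G(P)$ modulo $P$). Actually the cleaner route is: the hypothesis $\Ff = \Ff_P(N_G(P))$ forces $N_G(P)/PC_G(P)$ to have order prime to $p$, so $M(P)$ restricted to $C_G(P)$ stays indecomposable by a Green-correspondence/trivial-source argument; hence $Q \neq P$ and $R = N_P(Q) \supsetneq Q$ strictly contains $Q$ (as $Q$ is proper in the $p$-group $P$).

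Next, set $R = N_P(Q)$ and $H = N_G(Q)$. By Lemma~\ref{control-scott}(ii), $\Sc(H, R)$ is a direct summand of $M(Q)$ as a $kH$-module, with vertex $R$. The key point is that $R C_G(Q)$ is a subgroup of $H$ containing $R$ as a Sylow $p$-subgroup (since $Q \unlhd H$ forces $N_P(Q) = P \cap H$ to be Sylow in $H$, and $R C_G(Q)$ has index in $H$ dividing $|H : R C_G(Q)|$, which is prime to $p$ because $R \leq R C_G(Q)$). So the restriction to $R C_G(Q)$ of the $kH$-module $\Sc(H,R)$ contains $\Sc(R C_G(Q), R)$ as a summand, and I want to show that in fact $\Res^{N_G(Q)}_{RC_G(Q)}(M(Q))$ equals $\Sc(RC_G(Q), R)$ on the nose — i.e., that there are no other summands. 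Here I would use the maximality of $Q$: for every subgroup $Q'$ of $P$ with $|Q'| > |Q|$, the module $\Res^{N_G(Q')}_{C_G(Q')}(M(Q'))$ is indecomposable. I would take the larger subgroup $Q R = R$ itself (note $Q < R \leq P$, so $|R| > |Q|$), or more precisely apply the maximality to a suitable subgroup strictly containing $Q$ inside $R C_G(Q)$, to conclude that further Brauer quotients of the hypothetical extra summands at $p$-subgroups strictly above $Q$ must vanish — forcing those summands to have vertex exactly $Q$, hence (being $p$-permutation modules with $Q$ acting trivially) to be projective $k[C_G(Q)]$-modules relative to $Q$; combined with the fact that $M(Q)$ has no projective-relative-to-a-proper-subgroup part beyond what is accounted for by $\Sc(H,R)$, this pins down the restriction.

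The main obstacle will be the last step: ruling out extra indecomposable summands of $\Res^{N_G(Q)}_{RC_G(Q)}(M(Q))$ besides $\Sc(RC_G(Q),R)$. The clean way is a counting/vertex argument: every indecomposable summand $Y$ of $\Res^{H}_{RC_G(Q)}(M(Q))$ has a vertex contained in $R$ (a Sylow $p$-subgroup of $RC_G(Q)$); if some $Y$ had vertex strictly smaller than $R$, one produces via the Brauer construction a nonzero $Y(S)$ for a proper subgroup $S$ of $R$, and then traces this back through $M(Q)(S) \subseteq M(QS)$ — using $\Ff = \Ff_P(N_G(P))$ to move things inside $P$ — to contradict either the maximality of $Q$ (if $|QS| > |Q|$) or the known structure of $M(Q)$ itself. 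The delicate bookkeeping is to show $QS$ is genuinely larger than $Q$, i.e. that $S \not\leq Q$ can be arranged; this is where $R = N_P(Q) > Q$ is used, since then $S$ ranges over proper subgroups of $R$ which can be chosen to meet $R \setminus Q$. I expect that once all summands are forced to have full vertex $R$ and a trivial source (being summands of $\Res(kH/R)$-type modules coming from $M = \Sc(G,P)$), the trivial-quotient property of Scott modules propagates through the Brauer construction and restriction exactly as in the proof of Lemma~\ref{restrict-scott}, leaving $\Sc(RC_G(Q),R)$ as the only possibility and giving the claimed isomorphism.
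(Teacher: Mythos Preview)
Your plan has the right large-scale shape (use maximality of $Q$ together with Lemma~\ref{control-scott}), but it misses the structural fact that actually drives the proof, and several of the intermediate claims are either unjustified or false.

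First, the assertion that $R=N_P(Q)$ is a Sylow $p$-subgroup of $N_G(Q)$ (and of $RC_G(Q)$) is unwarranted: $P$ is only assumed to be a $p$-subgroup of $G$, not a Sylow $p$-subgroup, so $C_G(Q)$ may contain $p$-elements outside $P$. Your vertex-bounding argument for summands of $\Res^{N_G(Q)}_{RC_G(Q)}(M(Q))$ rests on this, so it collapses. Even granting it, your treatment of a summand $Y$ with vertex $S\subsetneq R$ is problematic: $S$ is determined (up to conjugacy) by $Y$, not ``chosen to meet $R\setminus Q$'', and $S=Q$ is perfectly possible, in which case $|QS|=|Q|$ and you get no contradiction from maximality. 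Finally, even if every summand had vertex $R$ and trivial source, that does not force it to be the Scott module; there are in general many indecomposable trivial-source modules with a given vertex.

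The paper's proof proceeds differently in two essential ways. It first shows that $M(Q)$ is already indecomposable as a $kN_G(Q)$-module, hence equal to $\Sc(N_G(Q),R)$: writing $\Res^G_{N_G(Q)}(M)=\Sc(N_G(Q),R)\oplus X$ via Lemma~\ref{control-scott}(ii), any summand $Y$ of $X$ with $Y(Q)\neq 0$ has, by Lemma~\ref{control-scott}(i), a vertex $S$ with $Q\lneq S\leq R$ (strict by Burry--Carlson--Puig, since $Q$ is not a vertex of $M$); then both $\Sc(N_G(Q),R)(S)$ and $X(S)$ are nonzero, so $M(S)$ is decomposable over $N_G(Q)\cap N_G(S)\geq C_G(S)$, contradicting maximality. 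Second, and this is the point you are missing entirely, the hypothesis $\Ff=\Ff_P(N_G(P))$ gives $N_G(Q)=(N_G(Q)\cap N_G(P))C_G(Q)$, and since $N_G(Q)\cap N_G(P)$ normalises $R$, the subgroup $L=RC_G(Q)$ is \emph{normal} in $N_G(Q)$. Clifford theory then says $\Res^{N_G(Q)}_L(M(Q))=\bigoplus_x {}^xV$ for a single indecomposable $kL$-module $V$ with vertex $R$; if there were more than one conjugate, applying the Brauer construction at $R$ would make $M(R)$ decomposable over $C_G(R)$, again contradicting maximality. This normality-plus-Clifford step is what pins the restriction down to a single indecomposable summand and hence to $\Sc(L,R)$; your outline has no substitute for it.
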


\begin{proof}
The $kC_G(P)$-module $M(P)$ is indecomposable by 
\cite[Lemma 4.3(ii)]{KeKuMi} (this lemma requires the hypothesis
on $\Ff$ being saturated). Thus $Q$ is a proper subgroup of $P$, 
hence a proper subgroup of $R=$ $N_P(Q)$, by 
\cite[Chap.1, Theorem 2.11(ii)]{Go}. We first show that $M(Q)$ is 
indecomposable as a $kN_G(Q)$-module. By Lemma \ref{control-scott} (ii) 
we have $\Res^G_{N_G(Q)}(M)=$ $\Sc(N_G(Q),R)\oplus X$ for some
$kN_G(Q)$-module $X$, and $\Sc(N_G(Q),R)$ is isomorphic to a direct
summand of $M(Q)$. We need to show that $X(Q)=$ $\{0\}$.
Arguing by contradiction, suppose that $X(Q)\neq\{0\}$.
Then there exists an indecomposable direct summand $Y$ 
of $X$ such that $Y(Q)\neq \{0\}$. Since $\Ff$ is saturated and 
$N_{\Ff}(P) = \Ff$, it follows from Lemma~\ref{control-scott}(i) 
that $Y$ has a vertex $S$ such that $Q \leq S \leq P$.
Then $S \leq N_G(Q)\cap P = R$. Note that $Q$ is not a vertex
of $M$ since $|Q| \, \ne \, |P|$.
If $Q = S$, then $Q$ is a vertex of $Y$, and hence $Q$ is a vertex
of $M$ by the result of Burry-Carlson-Puig
\cite[Chap. 4, Theorem 4.6(ii)]{NaTsu}, a contradiction.
Thus $Q$ is a proper subgroup of $S$. Since $Y$ is an indecomposable 
$p$-permutation $kN_G(Q)$-module with vertex $S$, we have $Y(S)\neq$
$\{0\}$, and hence $X(S)\neq$ $\{0\}$.
Since $R$ is a vertex of
$\Sc(N_G(Q), R)$ and $S\leq R$, it follows that
$\Big(\Sc(N_G(Q), R)\Big)(S)\neq$ $\{0\}$.
We have 
$$ 
\Big(\Res^G_{N_G(Q)}(M)\Big)(S)
\ = \
\Big( \Sc(N_G(Q), R)\Big)(S) \bigoplus X(S),
$$
and both of the two direct summands of the right hand side are 
non-zero. This implies that  $\Big(\Res^G_{N_G(Q)}(M)\Big)(S)$
is not indecomposable; in other words, $M(S)$ is not indecomposable
as a $k(N_G(Q)\cap N_G(S))$-module.
Since $C_G(S) \le C_G(Q)\cap N_G(S) \le N_G(Q)\cap N_G(S)$,
it follows that $M(S)$ is not indecomposable as a $kC_G(S)$-module.
But this contradicts the assumptions since 
$|P:S| < |P:Q|$. This shows that $X(Q) = \{0\}$, and hence that
$M(Q)$ is indecomposable as a $kN_G(Q)$-module. Using
Lemma \ref{control-scott} (ii), this shows that
$$
   M(Q) = \mathrm{Sc}(N_G(Q), R).
$$  
Set $L=$ $RC_G(Q)$. Since $\Ff=$ $\Ff_P(N_G(P))$, it follows that 
$N_G(Q)=$ $(N_G(Q)\cap N_G(P))C_G(Q)$. The subgroup $N_G(P)\cap N_G(Q)$ 
normalises $R$, and hence $L$ is a normal subgroup of $N_G(Q)$ and we 
have $N_G(Q)=$ $(N_G(R)\cap N_G(Q))L$.  In particular, $L$ acts 
transitively on the set of $N_G(Q)$-conjugates of $R$. 
Since $M(Q)$ has $R$ as a vertex and $R\leq L$, there is an 
indecomposable $kL$-module $V$ with vertex $R$ such that $M(Q)$ is 
isomorphic to a direct summand of $\Ind^{N_G(Q)}_L(V)$. The Mackey 
formula, using that $L$ is normal in $N_G(Q)$, implies that 
$$
\Res^{N_G(Q)}_L(M(Q)) = \bigoplus_x\ {^x{V}}
$$
with $x$ running over a subset $E$ of $N_G(Q)\cap N_G(R)$.
In particular, all indecomposable direct summands of
$\Res^{N_G(Q)}_L(M(Q))$ have $R$ as a vertex. Thus
applying the Brauer construction with respect to $R$ sends
every summand to a non-zero $kN_L(R)$-module.
Therefore, if the set $E$ has more than one element, then 
$M(Q)(R)=M(R)$ is decomposable as a $kN_L(R)$-module,
hence also as a $kC_G(R)$-module. This contradicts the assumptions,
and hence $X$ consists of a single element, or equivalently,
$\Res^{N_G(Q)}_L(M(Q))$ is indecomposable. Then necessarily
$\Res^{N_G(Q)}_L(M(Q))\cong$ $\Sc(L,R)$, whence the result.
\end{proof}

\begin{proof}[{Proof of Theorem \ref{ScottBrauerindec}}]
Set $M=$ $\Sc(G,P)$.
Suppose that $M$ is Brauer indecomposable. Then $M(Q)=$
$\Sc(N_G(Q), N_P(Q))$ by Lemma \ref{control-scott} (ii), and
$M(Q)$ is indecomposable as a module for any subgroup of $N_G(Q)$ 
containing $C_G(Q)$. In particular, setting $M_Q =$ 
$\Sc(N_P(Q)C_G(Q), N_P(Q))$, we have $M_Q\cong$  
$\Res^{N_G(Q)}_{N_P(Q)C_G(Q)}(M(Q))$. By the assumptions, 
the restriction to $kC_G(Q)$ of this module remains
indecomposable. Suppose conversely that 
$\Res^{N_P(Q)C_G(Q)}_{C_G(Q)}(M_Q)$ remains indecomposable for
all subgroups $Q$ of $P$. Arguing by contradiction, suppose that
$M$ is not Brauer indecomposable.
Let $Q$ be a subgroup of maximal order of $P$ such that
$\Res^{N_G(Q)}_{C_G(Q)}(M(Q))$ is decomposable. Set $R=$ $N_P(Q)$.
By Lemma \ref{notBrauerindec}, the $kRC_G(Q)$-module
$\Res^{N_G(Q)}_{RC_G(Q)}(M(Q))$ is indecomposable with vertex $R$, 
hence isomorphic to $M_Q=\Sc(RC_G(Q), R)$ by Lemma 
\ref{control-scott} (ii). Thus $\Res^{RC_G(Q)}_{C_G(Q)}(M_Q)$ is 
decomposable, contradicting the assumptions.
\end{proof}

\begin{proof}[{Proof of Theorem \ref{saturationNew}}]
Set $M=$ $\Sc(G,P)$.
Arguing by contradiction, let $Q$ be a subgroup of maximal order in 
$P$ such that $M(Q)$ is not indecomposable as a $kC_G(Q)$-module.
Set $R=$ $N_P(Q)$ and $L=$ $RC_G(Q)$.
It follows from Lemma \ref{notBrauerindec} that $Q$ is a proper
subgroup of $P$, and that $\Res^{N_G(Q)}_L(M(Q))$ is indecomposable,
with $R$ as a vertex, hence isomorphic to $\Sc(L,R)$ by Lemma
\ref{control-scott} (ii). By the construction of $M(Q)$, the group $Q$ 
acts trivially on $M(Q)$.

\medskip
Suppose first that hypothesis {\bf (a)} holds; that is, $R=$ 
$QC_P(Q)$. Then $L=$ $QC_G(Q)$. Thus 
$\Res^{N_G(Q)}_{QC_G(Q)}(M(Q))$ is indecomposable. Since
$Q$ acts trivially, it follows that
$\Res^{N_G(Q)}_{C_G(Q)}(M(Q))$ is indecomposable, a contradiction.

\medskip
Thus hypothesis {\bf (b)} holds; that is, $C_G(Q)$ is $p$-nilpotent.
The indecomposable $kN_G(Q)$-module $M(Q) =$ $\Sc(N_G(Q),R)$ is in the 
principal block as a $kN_G(Q)$-module, and its restriction to $L=$
$RC_G(Q)$ remains indecomposable by the above. Hence we can assume that 
$O_{p'}(N_G(Q)) = 1$. Then also $O_{p'}(C_G(Q)) = 1$. This implies that 
$C_G(Q)$ is a $p$-group by (b). Hence the groups
$C_G(Q)$, $C_G(R)$, $L=RC_G(Q)$, and $QC_G(Q)$
are all finite $p$-groups.  Using that transitive permutation
modules of finite $p$-groups are indecomposable, it follows
that
$$ \Sc(L,R) = \Res^{N_G(Q)}_{L}(M(Q)) \cong \Ind_R^{L}(k)\ . $$
The Mackey formula implies that
\begin{align*}
\Res^{N_G(Q)}_{QC_G(Q)}(M(Q)) &=
\Res^{L}_{QC_G(Q)}
\, \circ \, \Res^{N_G(Q)}_{L}(M(Q))
\\
&= \Res^{L}_{QC_G(Q)}
\, \circ \, \Ind_R^{L}(k)
\\
&= \Ind_{QC_G(Q)\cap R}^{QC_G(Q)}(k),
\end{align*}
since there is a single double coset here, and so only
one term in the Mackey formula.
This is again a transitive permutation module of the
$p$-group $QC_G(Q)$, hence indecomposable.
As before, since $Q$ acts trivially on $M(Q)$, this implies that
$\Res^{N_G(Q)}_{C_G(Q)}(M(Q))$ is indecomposable.
This concludes the proof.
\end{proof}

\begin{proof}[{Proof of Corollary \ref{saturation}}]
Set $H=$ $N_G(P)$.
The fusion system of $G\times H$ on $\Delta P$ is equal to 
that of $\Delta H$ on $\Delta P$, and this is saturated
as $P$ is a Sylow $p$-subgroup of $H$. Moreover, for
$Q$ a subgroup of $P$, we have $C_{G\times H}(\Delta Q)=$
$C_G(Q)\times C_H(Q)$. Thus if $C_G(Q)$ is $p$-nilpotent, then
so is $C_{G\times H}(\Delta Q)$.
The result follows from Theorem~\ref{saturationNew}.
\end{proof}

\begin{exa} \label{examplep3}
Suppose that $p=3$. Let $G$ be a finite group. Assume that $G$ has a 
Sylow $3$-subgroup $P$ such that $P \cong M_3(3)$, the extraspecial 
$3$-group of order $27$ of exponent $9$. Set $H=$ $N_G(P)$. Then 
the $k(G\times H)$-Scott module $M=$ $\Sc(G\times H, \, \Delta P)$ induces 
a stable equivalence of Morita type between the principal blocks 
$B_0(kG)$ and $B_0(kH)$. This is trivial if $G$ is $3$-nilpotent because
both blocks are isomorphic to $kP$ in that case.
If $G$ is not $3$-nilpotent, then $|N_G(P)/PC_G(P)| = 2$. Let $Q$ be a 
non-trivial subgroup of $P$. It follows from 
Theorem~\ref{saturationNew}, results of Hendren
\cite[Propositions 5.12 and 5.13]{He} and the $Z_3^*$-theorem 
that $M(Q)$ induces a Morita equivalence between $B_0(kC_G(Q))$ and 
$B_0(kC_H(Q))$. Hence the gluing theorem \cite[Theorem 6.3]{Br1} 
implies that $M$ induces a stable equivalence of Morita type between
the principal blocks of $kG$ and $kH$. Furthermore, by 
\cite[Proposition 5.3]{Br1}, such a stable equivalence of Morita
type implies the equality $\mathrm{k}(B_0(kG)) - \ell(B_0(kG)) =$ 
$\mathrm{k}(B_0(kH)) - \ell(B_0(kH))$, where
$\mathrm{k}(B_0(kG))$ and $\ell(B_0(kG))$ denote the number of
ordinary and modular irreducible characters $B_0(kG)$,
respectively, with the analogous notation for $H$ instead of $G$. 
This yields a proof of a special case of a result of Hendren 
\cite[Theorem 5.14]{He}: if $G$ is not $3$-nilpotent, then 
$\mathrm{k}(B_0(kG)) - \ell(B_0(kG)) =$ $8$. 
\end{exa}

\begin{rmk}
Let $G$ be a finite group and $P$ a Sylow $p$-subgroup of $G$.
The Scott module 
$\Sc(G\times N_G(P),\Delta P)$ is the Green correspondent
of the Scott module $\Sc(G\times G,\Delta P)$, which is isomorphic
to the prinicipal block of $kG$ viewed as a $k(G\times G)$-module. 
One might wonder how to generalise Corollary \ref{saturation} 
to arbitrary blocks. 
Let $b$ be a block of $kG$ and let $(P,e_P)$ be
a maximal $(G,b)$-Brauer pair. Set $H=$ $N_G(P,e_P)$. The
$(G\times H)$-Green correspondent with vertex $\Delta P$ of the
$k(G\times G)$-module $kGb$ is of the form $M=$ $kG f$ for
some primitive idempotent $f$ in $(kGb)^{\Delta H}$ satisfying
$\Br_{\Delta P}(f)e_P\neq$ $0$ (see e.g. \cite{ALR}).  
Note that $(P,e_P)$ is also a maximal $(H,e_P)$-Brauer pair.
For any subgroup $Q$ of $P$ denote by $e_Q$ the unique block of
$kC_G(Q)$ satisfying $(Q,e_Q)\leq$ $(P,e_P)$ and by $f_Q$ the
unique block of $kC_H(Q)$ satisfying $(Q,e_Q)\leq$ $(P,e_P)$.
The `obvious' generalisation of Corollary \ref{saturation} would be the
statement that the $kC_G(Q)e_Q$-$kC_H(Q)f_Q$-bimodule 
$e_QM(\Delta Q)f_Q$ is indecomposable. This is, however, not the
case in general. In order to construct an example for which this is
not the case, we first translate this indecomposability to the
source algebra level. 

Let $j\in$ $(kHe_P)^{\Delta P}$ be a source idempotent $e_P$ as a block
of $kHe_P$. Then $i=$ $jf$ is a source idempotent of $kGb$ (see
e.g. \cite[4.10]{FP}). Thus multiplication by $f$ induces an
interior $P$-algebra homomorphism from $B=$ $jkHj$ to $A=$ $ikGi$.
In particular, $A$ can be viewed as an $A$-$B$-bimodule. 
Multiplication by a source idempotent, or more generally, by
an almost source idempotent, is a Morita equivalence (cf.
\cite[3.5]{Puigpointed} and \cite[4.1]{Liperm}). Moreover,
the Brauer construction with respect to a fully $\CF$-centralised
subgroup $Q$ of $P$ sends the source idempotent $i$ to the almost source
idempotent $\Br_{\Delta Q}(i)$ in $kC_G(Q)e_Q$ (cf. \cite[4.5]{Liperm}). 
Through the appropriate Morita equivalences, the $kGb$-$kHe_P$-bimodule
$M=$ $kGf$ corresponds to the $A$-$B$-bimodule $iMj=$ $ikGjf=$ $A$, and 
the $kC_G(Q)e_Q$-$kC_H(Q)f_Q$-bimodule $e_QM(\Delta Q)f_Q$ corresponds
to the $A(\Delta Q)$-$B(\Delta Q)$-bimodule $A(\Delta Q)$.
It follows that for $Q$ a fully $\CF$-centralised subgroup of $P$,
the indecomposability of $e_Q M(\Delta Q) f_Q$ is equivalent to
the indecomposability of $A(\Delta Q)$ as an 
$A(\Delta Q)$-$B(\Delta Q)$-module.

We construct an example for which this fails. Suppose that $p$ is odd.
Let $P$ be an extraspecial $p$-group of order $p^3$ of exponent $p$.
Let $Q$ be a subgroup of order $p^2$ in $P$; we have $C_P(Q)=$ $Q$
and in particular, $Q$ is fully centralised (even centric) with respect 
to any fusion system on $P$. Set $V=$
$\Inf^P_{P/Q}(\Omega_{P/Q}(k))$. Thus $\dim_k(V)=$ $p-1$, and
$Q$ acts trivially on $V$. Setting $S=$ $\End_k(V)$, it follows
that $S=$ $S^{\Delta Q}\cong$ $S(\Delta Q)$. By the main result of
Mazza in \cite{Mazza}, there exists a nilpotent block of some finite
group having a source algebra isomorphic to $A=$ $S\tenk kP$. The
Brauer correspondent of such a block has source algebra $B=$ $kP$.
We have $A(\Delta Q)$ $(S\tenk kP)(\Delta Q)\cong$ $S\tenk kQ$ 
and $B(\Delta Q)=$ $kQ$. Thus any primitive idempotent
$e$ in $S=$ $S^{\Delta Q}$ determines a nontrivial direct bimodule
summand $Se\tenk kQ$ of $A(\Delta Q)$, and hence $A(\Delta Q)$ is not 
indecomposable as an $A(\Delta Q)$-$B(\Delta Q)$-module. 
\end{rmk}

\noindent{\bf Acknowledgements.}
{\small
A part of this work was done while the second author was visiting the
first and the third authors in the University of Aberdeen in February 2012
and in the City University London in October 2013. He would like to thank the
Mathematics Institutes for their wonderful hospitality.
The second author was supported by the Japan Society for Promotion of Science 
(JSPS) Grant-in-Aid for Scientific Research(C) 23540007, 2011--2014.}
The authors would like to thank the referee for her/his comments on the
first version.
\bibliographystyle{amsalpha}  %

\end{document}